\DeclareFontFamily{T1}{pzc}{}
\DeclareFontShape{T1}{pzc}{m}{it}{1.8 <-> pzcmi8t}{}
\DeclareMathAlphabet{\mathpzc}{T1}{pzc}{m}{it}
\theoremstyle{plain}
\newtheorem{prop}{Proposition}[section]
\newtheorem{lem}[prop]{Lemma}
\newtheorem{thm}[prop]{Theorem}
\theoremstyle{definition}
\newtheorem{defn}[prop]{Definition}
\newtheorem{exm}[prop]{Example}
\newtheorem{rem}[prop]{Remark}
\newcommand{\vertiii}[1]{{\left\vert\kern-0.25ex\left\vert\kern-0.25ex\left\vert #1
    \right\vert\kern-0.25ex\right\vert\kern-0.25ex\right\vert}}
\newbox\ncintdbox \newbox\ncinttbox 
\newcommand{\Id}{\mathrm{Id}}                
\newcommand{\A}{\mathcal{A}}                 
\newcommand{\C}{\mathbb{C}}                  
\newcommand{\hookto}{\hookrightarrow}        
\newcommand{\N}{\mathbb{N}}                  
\newcommand{\R}{\mathbb{R}}                  
\newcommand{\Z}{\mathbb{Z}}                  
\newcommand{\al}{\alpha}          
\newcommand{\bt}{\beta}           
\def\<#1|#2>{\langle#1\stroke#2\rangle} 
\def\?#1|#2?{\{#1\stroke#2\}}        
\def\<#1,#2>{\langle#1,#2\rangle}            
\def\ee_#1{e_{{\scriptscriptstyle#1}}}       
\def\wick:#1:{\mathopen:#1\mathclose:}       
\newbox\ncintdbox \newbox\ncinttbox 
\newcommand{\stroke}{\mathbin|}   
\newcommand{\SU}{SU}       
\title{Coverings of Quantum Groups}
\begin{document}
\maketitle  \setlength{\parindent}{0pt}
\begin{center}
\author{
{\textbf{Petr R. Ivankov*}\\
e-mail: * monster.ivankov@gmail.com \\
}
}
\end{center}



\paragraph{}
It is known that any covering space of a topological group has the natural structure of a topological group. This article discusses a noncommutative generalization of this fact. A noncommutative generalization of the topological group is a quantum group. Also there is a noncommutative generalization of a covering. The combination of these algebraic constructions yields a motive to research the generalization of coverings of topological groups. In contrary to a topological group a covering space of a quantum group does not have the natural structure of the quantum group. However a covering space of a quantum group satisfies to a condition which is weaker than the condition of a covering space of a topological group.

\section{Motivation. Preliminaries}

\paragraph{}  
In this article we discuss a noncommutative analog of the following proposition.
\begin{prop}\label{cov_top_prop}\cite{mimuta_toda_lie} If $G$ is a topological group and $\pi:\widetilde{G} \to G$ is a covering, then for a covering space $\widetilde{G}$ one can introduce uniquely the structure of a topological group on $\widetilde{G}$ such that $\pi$ is a homomorphism and an arbitrary point $\widetilde{e}$ of the fibre over the unit $e$ of $G$ is the unit.
\end{prop}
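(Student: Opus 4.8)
The plan is to build the group operations on $\widetilde{G}$ by lifting the corresponding operations on $G$ through $\pi$, using the unique-lifting criterion for covering maps. Throughout I assume $\widetilde{G}$ is connected and locally path-connected, a property that propagates to all finite powers $\widetilde{G}^{\,n}$, so that the lifting theorem applies to continuous maps defined on these powers.

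First I would define the multiplication. Consider the continuous map $\mu\colon\widetilde{G}\times\widetilde{G}\to G$ given by $\mu(\widetilde{a},\widetilde{b})=\pi(\widetilde{a})\,\pi(\widetilde{b})$, obtained by composing $\pi\times\pi$ with the multiplication of $G$. Since $\mu(\widetilde{e},\widetilde{e})=e=\pi(\widetilde{e})$, the lifting theorem produces a unique continuous lift $\widetilde{\mu}\colon\widetilde{G}\times\widetilde{G}\to\widetilde{G}$ with $\pi\circ\widetilde{\mu}=\mu$ and $\widetilde{\mu}(\widetilde{e},\widetilde{e})=\widetilde{e}$, provided the subgroup criterion $\mu_*\big(\pi_1(\widetilde{G}\times\widetilde{G},(\widetilde{e},\widetilde{e}))\big)\subseteq\pi_*\big(\pi_1(\widetilde{G},\widetilde{e})\big)$ holds. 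To verify it, note that $\pi_1(\widetilde{G}\times\widetilde{G})$ is generated by loops of the form $(\alpha,\mathrm{const})$ and $(\mathrm{const},\beta)$; on such a loop $\mu$ restricts to $\pi\circ\alpha$ (respectively $\pi\circ\beta$), so $\mu_*$ carries it into $\pi_*(\pi_1(\widetilde{G},\widetilde{e}))$. Declaring $\widetilde{\mu}$ to be the multiplication of $\widetilde{G}$ makes $\pi$ a homomorphism by construction. In the same way, lifting $\widetilde{a}\mapsto\pi(\widetilde{a})^{-1}$ yields a unique inverse map $\widetilde{\nu}\colon\widetilde{G}\to\widetilde{G}$ with $\widetilde{\nu}(\widetilde{e})=\widetilde{e}$; the criterion again holds because the relevant subgroup of $\pi_1$ is closed under the induced inversion.

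The group axioms then all reduce to the uniqueness clause of the lifting theorem: two lifts of the same map out of a connected space that agree at one point must coincide. For associativity, the maps $(\widetilde{a},\widetilde{b},\widetilde{c})\mapsto\widetilde{\mu}(\widetilde{\mu}(\widetilde{a},\widetilde{b}),\widetilde{c})$ and $(\widetilde{a},\widetilde{b},\widetilde{c})\mapsto\widetilde{\mu}(\widetilde{a},\widetilde{\mu}(\widetilde{b},\widetilde{c}))$ are both lifts of the associative triple multiplication $\widetilde{G}^{\,3}\to G$ and both send $(\widetilde{e},\widetilde{e},\widetilde{e})$ to $\widetilde{e}$, hence are equal. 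The unit axiom follows because $\widetilde{a}\mapsto\widetilde{\mu}(\widetilde{e},\widetilde{a})$ and the identity map are both lifts of $\widetilde{a}\mapsto\pi(\widetilde{a})$ fixing $\widetilde{e}$ (and symmetrically on the right); the inverse axiom because $\widetilde{a}\mapsto\widetilde{\mu}(\widetilde{\nu}(\widetilde{a}),\widetilde{a})$ and the constant map at $\widetilde{e}$ are both lifts of the constant map $e$ fixing $\widetilde{e}$. Uniqueness of the whole structure is the same argument run backwards: any multiplication making $\pi$ a homomorphism with unit $\widetilde{e}$ is forced to be a lift of $\mu$ fixing $(\widetilde{e},\widetilde{e})$, hence equals $\widetilde{\mu}$, and the inverse is then determined as well.

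I expect the only real subtlety to be the verification of the $\pi_1$-subgroup criterion together with the standing connectivity and local-path-connectivity hypotheses that make the lifting theorem available; once these are in place, the entire algebraic content is handled uniformly by uniqueness of lifts, so no genuine computation remains. A secondary point worth recording is that continuity of $\widetilde{\mu}$ and $\widetilde{\nu}$ is automatic, since the lifts furnished by the covering-space lifting theorem are continuous by construction.
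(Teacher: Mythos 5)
The paper does not actually prove this proposition; it is quoted with a citation to Mimura--Toda and used as a black box, so there is no internal proof to compare against. Your lifting argument is the standard one and is correct: the generation of $\pi_1(\widetilde{G}\times\widetilde{G},(\widetilde{e},\widetilde{e}))$ by loops supported in one factor, together with the fact that $\mu$ restricts on such loops to $\pi\circ\alpha$, gives the subgroup criterion for $\widetilde{\mu}$, and every group axiom plus uniqueness then follows from uniqueness of lifts out of a connected space. The one place where you are terser than you should be is the inverse map: the criterion there is $\iota_*\bigl(\pi_*\pi_1(\widetilde{G},\widetilde{e})\bigr)\subseteq\pi_*\pi_1(\widetilde{G},\widetilde{e})$ with $\iota$ the inversion of $G$, and the justification is the Eckmann--Hilton/H-space fact that pointwise inversion of a loop is homotopic to its reverse, so $\iota_*$ acts on $\pi_1(G,e)$ as $[\gamma]\mapsto[\gamma]^{-1}$ and therefore preserves every subgroup; this is what your phrase ``closed under the induced inversion'' must mean, and it is worth spelling out. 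Finally, you are right to add connectedness and local path-connectedness of $\widetilde{G}$ as standing hypotheses: they are not in the statement as printed, but without connectedness even the uniqueness claim fails, and they are implicit in the cited source.
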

For this purpose we need noncommutative generalizations of following objects:
\begin{itemize}
	\item Topological spaces,
	\item Coverings,
	\item Topological groups. 
\end{itemize}
\subsection{Generalization of topological objects}
\subsubsection{Noncommutative topological spaces}
  \paragraph*{} Gelfand-Na\u{\i}mark theorem \cite{arveson:c_alg_invt} states the correspondence between  locally compact Hausdorff topological spaces and commutative $C^*$-algebras.

\begin{thm}\label{gelfand-naimark}\cite{arveson:c_alg_invt} (Gelfand-Na\u{\i}mark). 
Let $A$ be a commutative $C^*$-algebra and let $\mathcal{X}$ be the spectrum of A. There is the natural $*$-isomorphism $\gamma:A \to C_0(\mathcal{X})$.
\end{thm}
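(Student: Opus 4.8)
The plan is to realize $\mathcal{X}$ concretely as the space of characters and to prove the statement via the Gelfand transform together with the Stone--Weierstrass theorem. First I would define the spectrum precisely: let $\mathcal{X}$ be the set of nonzero $*$-homomorphisms (characters) $\chi\colon A\to\C$, topologized by the weak-$*$ topology inherited from the dual $A^*$. By the Banach--Alaoglu theorem the unit ball of $A^*$ is weak-$*$ compact, and $\mathcal{X}\cup\{0\}$ is a weak-$*$ closed subset of it; removing the isolated functional $0$ shows that $\mathcal{X}$ is locally compact and Hausdorff (and compact when $A$ is unital). The candidate isomorphism is the Gelfand transform $\gamma(a)=\hat a$, where $\hat a(\chi)\eqdef\chi(a)$; each $\hat a$ is weak-$*$ continuous by the very definition of the topology.

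The next step is to check that $\gamma$ maps into $C_0(\mathcal{X})$ and is a $*$-homomorphism. Multiplicativity, $\widehat{ab}=\hat a\,\hat b$, is immediate from $\chi(ab)=\chi(a)\chi(b)$. The $*$-compatibility $\widehat{a^*}=\overline{\hat a}$ rests on the fact that on a $C^*$-algebra every character is automatically self-adjoint: a self-adjoint element has real spectrum, so $\chi(b)\in\R$ whenever $b=b^*$, and the general case follows by writing $a=b+ic$ with $b,c$ self-adjoint. Vanishing at infinity of $\hat a$ I would obtain by passing to the unitization $\widetilde A$, whose character space is the one-point compactification $\mathcal{X}\cup\{\infty\}$, the added character being the one annihilating $A$.

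The heart of the argument is that $\gamma$ is isometric, and this is exactly where the $C^*$-identity is indispensable. Because $A$ is commutative, every element is normal, and for a self-adjoint $b$ the identity $\nor{b^2}=\nor{b}^2$ iterates to $\nor{b^{2^n}}=\nor{b}^{2^n}$, so the spectral-radius formula gives $r(b)=\lim_n\nor{b^{2^n}}^{1/2^n}=\nor{b}$. Applying this to $b=a^*a$ and using $r(a^*a)=\sup_{\chi}|\chi(a^*a)|=\sup_{\chi}|\chi(a)|^2=\nor{\hat a}_\infty^2$, one gets $\nor{a}^2=\nor{a^*a}=r(a^*a)=\nor{\hat a}_\infty^2$, that is $\nor{\gamma(a)}_\infty=\nor{a}$. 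I expect this spectral-radius step to be the main obstacle: it is the only place where the full $C^*$-structure, and not merely the Banach-algebra structure, is used, and it is precisely what guarantees that the characters detect the entire norm.

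Finally, surjectivity would follow from Stone--Weierstrass. Being isometric, $\gamma$ has closed range (as $A$ is complete), and this range is a $*$-subalgebra of $C_0(\mathcal{X})$. By the definition of the weak-$*$ topology it separates the points of $\mathcal{X}$ and vanishes nowhere identically, so the locally compact form of the Stone--Weierstrass theorem forces $\gamma(A)=C_0(\mathcal{X})$. Combined with injectivity (again from the isometry) this yields the desired natural $*$-isomorphism $\gamma\colon A\to C_0(\mathcal{X})$.
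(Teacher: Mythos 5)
The paper does not prove this statement at all: it is quoted verbatim from Arveson's book as background, so there is no internal argument to compare yours against. Your proposal is the standard and essentially complete proof of the commutative Gelfand--Na\u{\i}mark theorem: realize $\mathcal{X}$ as the character space with the weak-$*$ topology, show the Gelfand transform is a $*$-homomorphism into $C_0(\mathcal{X})$ (using that characters on a $C^*$-algebra are automatically self-adjoint and passing to the unitization for the vanishing-at-infinity claim), prove isometry via the $C^*$-identity and the spectral-radius formula, and conclude surjectivity from Stone--Weierstrass applied to the closed, point-separating, nowhere-vanishing $*$-subalgebra $\gamma(A)$. You correctly identify the spectral-radius step $\nor{a}^2=\nor{a^*a}=r(a^*a)$ as the crux where the $C^*$-identity enters. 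One small inaccuracy: the functional $0$ is \emph{not} isolated in $\mathcal{X}\cup\{0\}$ in the non-unital case --- if it were, $\mathcal{X}$ would be closed in a compact set and hence compact. The correct justification is simply that $\{0\}$ is closed (a point in a Hausdorff space), so $\mathcal{X}$ is an open subset of a compact Hausdorff space and therefore locally compact; when $A$ is unital, $0$ genuinely lies outside the closure of $\mathcal{X}$ and one recovers compactness. With that wording repaired, the argument is sound and is exactly the proof one finds in the cited reference.
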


\paragraph*{}So any (noncommutative) $C^*$-algebra may be regarded as a generalized (noncommutative)  locally compact Hausdorff topological space. 
\subsubsection{Generalization of coverings}
\paragraph*{} Following theorem gives a pure algebraic description of finite-fold coverings of compact spaces.
\begin{thm}\label{pavlov_troisky_thm}\cite{pavlov_troisky:cov}
	Suppose $\mathcal X$ and $\mathcal Y$ are compact Hausdorff connected spaces and $p :\mathcal  Y \to \mathcal X$
	is a continuous surjection. If $C(\mathcal Y )$ is a projective finitely generated Hilbert module over
	$C(\mathcal X)$ with respect to the action
	\begin{equation*}
	(f\xi)(y) = f(y)\xi(p(y)), ~ f \in  C(\mathcal Y ), ~ \xi \in  C(\mathcal X),
	\end{equation*}
	then $p$ is a finite-fold  covering.
\end{thm}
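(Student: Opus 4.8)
The plan is to translate the module-theoretic hypothesis into metric information about the fibres of $p$ and then to establish local triviality by hand. Write $A = C(\mathcal{X})$ and $M = C(\mathcal{Y})$, regarded as an $A$-module via $p^{*}$. First I would analyse the fibre of the module at a point $x \in \mathcal{X}$. Let $I_{x} = \{\xi \in A : \xi(x) = 0\}$ be the maximal ideal at $x$; a short Urysohn argument (using compactness of $\mathcal{X}$) shows that the closure of $M I_{x}$ equals $J_{x} = \{f \in M : f|_{p^{-1}(x)} = 0\}$. One inclusion is immediate from $(f\xi)(y) = f(y)\xi(p(y))$, and for the reverse, given $f \in J_{x}$ and $\varepsilon > 0$ one picks $\xi \in I_{x}$ equal to $1$ on the compact set $p(\{|f| \ge \varepsilon\})$, which misses $x$, so that $\|f - f\xi\| \le \varepsilon$. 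Since $J_{x}$ is a closed ideal of the algebra $M$ and restriction $M \to C(p^{-1}(x))$ is onto (Tietze) with kernel $J_{x}$, the fibre $M/\overline{M I_{x}}$ is identified, as a commutative $C^{*}$-algebra, with $C(p^{-1}(x))$.

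Next I would read off the two numerical consequences. If $\xi_{1}, \dots, \xi_{k}$ generate $M$ over $A$, then on the fibre the $A$-action is scalar (through $\mathrm{ev}_{x}$), so the images of $\xi_{1}, \dots, \xi_{k}$ span $C(p^{-1}(x))$ over $\C$; hence $\dim_{\C} C(p^{-1}(x)) \le k$, which forces $p^{-1}(x)$ to be finite with $|p^{-1}(x)| \le k$. Projectivity then supplies local constancy: writing $M \cong e\,A^{k}$ for an idempotent $e \in M_{k}(A)$, the fibre dimension equals $\tr e(x)$, a continuous $\Z$-valued function of $x$, hence locally constant; being equal to $|p^{-1}(x)|$ and $\mathcal{X}$ connected, it is a constant $n$. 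So every fibre has exactly $n$ points.

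The remaining, and hardest, step is to promote the constancy $|p^{-1}(x)| = n$ to local triviality, since a finite-to-one surjection of constant fibre cardinality need not be a covering without an openness input. Here I would exploit the algebra structure of the fibres. Fix $x_{0}$ with $p^{-1}(x_{0}) = \{y_{1}, \dots, y_{n}\}$, and let $\varepsilon_{i} \in C(p^{-1}(x_{0})) \cong \C^{n}$ be the minimal idempotent supported at $y_{i}$. Lifting $\varepsilon_{i}$ to an element of $M$ by Urysohn and reducing modulo $\overline{M I_{x}}$ gives a continuous section $a_{i}(x)$ of the bundle of fibre-algebras with $a_{i}(x_{0}) = \varepsilon_{i}$; since the spectrum of $a_{i}(x_{0})$ is $\{0,1\}$, for $x$ near $x_{0}$ the spectrum of $a_{i}(x)$ stays away from $\tfrac{1}{2}$, and the Riesz projection $e_{i}(x) = \frac{1}{2\pi i}\oint_{|z-1|=1/2}(z - a_{i}(x))^{-1}\,dz$ is a genuine idempotent depending continuously on $x$ with $e_{i}(x_{0}) = \varepsilon_{i}$. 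By continuity these $e_{i}(x)$ should remain distinct nonzero minimal idempotents of $C(p^{-1}(x)) \cong \C^{n}$, so over a neighbourhood $U$ of $x_{0}$ they index the $n$ points of each fibre, producing $n$ continuous local sections $\sigma_{i} : U \to \mathcal{Y}$ with disjoint images whose union is $p^{-1}(U)$ — that is, an evenly covered neighbourhood. I expect the main obstacle to be exactly this last step: verifying that the Riesz idempotents stay minimal (rank one), that their supports trace out disjoint open sheets homeomorphic to $U$, and that the fibre points neither merge nor split; controlling the spectra uniformly in $x$ is the delicate part.
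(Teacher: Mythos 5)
The paper itself offers no proof of this statement --- it is quoted verbatim from Pavlov--Troitsky --- so your argument can only be judged on its own terms. The first two thirds are correct and essentially complete: the identification $\overline{C(\mathcal Y)I_x}=\{f: f|_{p^{-1}(x)}=0\}$ via the Urysohn trick, the resulting isomorphism of the fibre $C(\mathcal Y)/\overline{C(\mathcal Y)I_x}\cong C(p^{-1}(x))$, the bound $|p^{-1}(x)|\le k$ from finite generation, and the constancy $|p^{-1}(x)|=\tr e(x)=n$ from projectivity plus connectedness are all sound. (One should remark that for a finitely generated projective module $MI_x$ is automatically closed, so the algebraic fibre $e(x)\C^{k}$ and the $C^*$-quotient you computed really are the same space.)

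The gap is exactly where you suspect it, and it is genuine: nothing in your last paragraph rules out the degeneration in which $e_i(x)=0$ for some $i$ and some $x$ arbitrarily close to $x_0$, compensated by an $e_j(x)$ of rank $\ge 2$. The difficulty is that for a general continuous surjection the field of fibre algebras $x\mapsto C(p^{-1}(x))$ is only \emph{upper} semicontinuous; lower semicontinuity of $x\mapsto\|f|_{p^{-1}(x)}\|_\infty$ is equivalent to openness of $p$, which is precisely the property separating a covering from a mere constant-cardinality surjection. As written, your Riesz-projection argument would apply verbatim to a non-open $n$-to-one surjection, where the conclusion is false --- so projectivity must be invoked a second time, essentially, at this step. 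The standard repair: by Swan's theorem $C(\mathcal Y)\cong eA^{k}$ is the section space of a locally trivial vector bundle, so the fibre norms (equivalently $x\mapsto\langle f,f\rangle(x)$ for the Hilbert-module inner product) are \emph{continuous}, not merely upper semicontinuous; hence $\|e_i(x)\|$ is close to $\|\varepsilon_i\|=1$ near $x_0$, each $e_i(x)$ is a nonzero idempotent, and $n$ pairwise orthogonal nonzero idempotents summing to $1$ in the $n$-dimensional algebra $C(p^{-1}(x))\cong\C^{n}$ are automatically minimal. With that supplied, your clopen sheets (the supports of the $e_i$ in $p^{-1}(V)$) map bijectively, hence by compactness homeomorphically, onto a neighbourhood of $x_0$, and the proof closes.
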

\begin{defn}
	If $A$ is a $C^*$- algebra then an action of a group $H$ is said to be {\it involutive } if $ga^* = \left(ga\right)^*$ for any $a \in A$ and $g\in H$. Action is said to be \textit{non-degenerated} if for any nontrivial $g \in H$ there is $a \in A$ such that $ga\neq a$. 
\end{defn}
Following definition is motivated by the Theorem \ref{pavlov_troisky_thm}.
\begin{defn}\cite{ivankov:qnc}\label{fin_def_uni}
	Let $A \hookto \widetilde{A}$ be an injective *-homomorphism of unital $C^*$-algebras. Suppose that there is a non-degenerated involutive action $H \times \widetilde{A} \to \widetilde{A}$ of finite group, such that $A = \widetilde{A}^H\stackrel{\text{def}}{=}\left\{a\in \widetilde{A}~|~ a = g a;~ \forall g \in H\right\}$. There is an $A$-valued product on $\widetilde{A}$ given by
	\begin{equation*}\label{finite_hilb_mod_prod_eqn}
	\left\langle a, b \right\rangle_{\widetilde{A}}=\sum_{g \in H} g\left( a^*, b\right) 
	\end{equation*}
	and $\widetilde{A}$ is an $A$-Hilbert module. We say that $\left(A, \widetilde{A}, H \right)$ is an \textit{unital noncommutative finite-fold  covering} if $\widetilde{A}$ is a finitely generated projective $A$-Hilbert module.
\end{defn}
\subsubsection{Generalization of topological groups}
\paragraph*{} A compact quantum group can be regarded as a noncommutative analog of a compact topological group.
\begin{defn}\cite{neshveyev_tuset_qg}
(Woronowicz) A \textit{compact quantum group} is a pair $\left( A, \Delta\right) $, where $A$
is an unital $C^*$ -algebra and $\Delta: A \to A\otimes A $  is an unital *-homomorphism, called \textit{comultiplication}, such that
\begin{enumerate}
	\item [(a)] $\left( \Delta \otimes \Id_A\right) \Delta = \left( \Id_A \otimes \Delta \right) \Delta$ as homomorphisms $A \to A \otimes A  \otimes A$, (\textit{coassociativity});
\item[(b)] The spaces $\left(A \otimes 1 \right) \Delta A = \mathrm{span}\left\{\left(a \otimes 1\right)  \Delta \left( b \right)~|~a,b \in A \right\}$   and $\left(1 \otimes A\right) \Delta A$  are dense in $A\otimes A$  (\textit{cancellation property}).
\end{enumerate}
In this definition by the tensor product of $C^*$ -algebras we  mean the minimal tensor product.
\end{defn}
Following example shows that a compact topological group is a special case of a quantum group.

\begin{exm}\label{comm_exm}\cite{neshveyev_tuset_qg}
	Let $G$ be a compact group. Take $A$ to be the $C^*$-algebra $C\left(G\right)$ of continuous functions on $G$. Then $A \otimes A = C\left(G \times G\right)$, so we can define $\Delta$ by
	$$
	\Delta\left(f\right)\left(g,h\right) = f\left(gh\right) \text{ for all } g, h \in G
	$$
	Coassociativity of $\Delta$ follows from associativity of the product in $G$. To see that the cancellation property holds, note that $\left(A \otimes 1 \right) \Delta A$ is the unital $C^*$-subalgebra of $C\left(G\times G\right)$	spanned by all functions of the form $\left(g, h\right) \mapsto  f_1\left(g\right)f_2\left(gh\right)$. Since such functions separate points of $G\times G$, the $C^*$-algebra $\left(A \otimes 1 \right) \Delta A$ is dense in $C\left(G\times G\right)$ by the Stone-Weierstrass
	theorem.
	Any compact quantum group $\left(A, \Delta\right)$ with abelian $A$ is of this form. Indeed, by the Gelfand theorem, $A = C\left(G\right)$ for a compact space $G$. Then, since $A\otimes A = C\left(G\times G\right)$, the
	unital *-homomorphism $\Delta$  is defined by a continuous map $G\times G \to G$. Coassociativity
	means that
	$$
	f\left(\left(gh\right)k\right) = f\left(g\left(hk\right)\right) \text{ for all } f \in C\left(G\right),
	$$
	whence $\left(gh\right)k = g\left(hk\right)$, so $G$ is a compact semigroup. If $gh = gk$, then $f_1\left(g\right)f_2\left(gh\right) =
	f_1\left(g\right)f_2\left(gk\right)$ for all $f_1; f_2 \in C\left(G\right)$. By the cancellation property the functions of the
	form $\left(g',h'\right) \mapsto f_1\left(g'\right)f_2\left(g'h'\right)$ span a dense subspace of $C\left(G \times G\right)$. It follows that
	$f\left(g, h\right) = f\left(g, k \right)$ for all $f \in   C\left(G\times G\right)$, whence $h = k$. Similarly, if $hg = kg$, then $h = k$.
	Thus $G$ is a semigroup with cancellation.
	In \cite{neshveyev_tuset_qg} it is proven that that any compact semigroup with cancellation is a group.
\end{exm}

\subsection{Finite Galois coverings}\label{fin_gal_cov_sec}
\paragraph*{} Here I follow to \cite{auslander:galois}. Let $A \hookto \widetilde{A}$ be an injective homomorphism of unital algebras, such that
\begin{itemize}
	\item $\widetilde{A}$ is a projective finitely generated $A$-module,
	\item There is an action $G \times \widetilde{A} \to \widetilde{A}$ of a finite group $G$ such that $$A = \widetilde{A}^G=\left\{\widetilde{a}\in \widetilde{A}~|~g\widetilde{a}=\widetilde{a}; ~\forall g \in G\right\}.$$
\end{itemize}
Let us consider the category $\mathscr{M}^G_{\widetilde{A}}$ of $G-\widetilde{A}$ modules, i.e.  any object $M \in \mathscr{M}^G_{\widetilde{A}}$ is a $\widetilde{A}$-module with equivariant action of $G$, i.e. for any $m \in M$ a following condition holds
$$
g\left(\widetilde{a}m \right)=  \left(g\widetilde{a} \right) \left(gm \right) \text{ for any } \widetilde{a} \in \widetilde{A}, ~ g \in G.
$$
Any morphism $\varphi: M \to N$ in the category $\mathscr{M}^G_{\widetilde{A}}$ is $G$- equivariant, i.e.
$$
\varphi\left( g m\right)= g \varphi\left( m\right)   \text{ for any } m \in M, ~ g \in G.
$$
Let $\widetilde{A}\left[ G\right]$ be an algebra such that $\widetilde{A}\left[ G\right] \approx \widetilde{A}\times G$ as an Abelian group and a multiplication law is given by
$$
\left( a, g\right)\left( b, h\right) =\left(a\left(gb \right), gh  \right).
$$
The category $\mathscr{M}^G_{\widetilde{A}}$ is equivalent to the category $\mathscr{M}_{\widetilde{A}\left[ G\right]}$ of $\widetilde{A}\left[ G\right]$ modules. Otherwise in \cite{auslander:galois} it is proven the equivalence between a category $\mathscr{M}_{A}$ of $A$-modules and the category $\mathscr{M}_{\widetilde{A}\left[ G\right]}$. It turns out that the category $\mathscr{M}^G_{\widetilde{A}}$ is equivalent to the category $\mathscr{M}_{A}$.

\section{Main result}
\paragraph*{} 
From the Proposition \ref{cov_top_prop}, Theorem \ref{pavlov_troisky_thm} and Example \ref{comm_exm} it turns out the following lemma

\begin{lem}\label{comm_lem}
	Let $\left(A, \Delta \right)$ be a commutative compact quantum group, and let   $\left(A, \widetilde{A},  H \right)$ be a noncommutative finite-fold covering such that $\widetilde{A}$ is a commutative algebra. Following condition holds:
	\begin{enumerate}
		\item[(i)] There is the natural structure   $\left(\widetilde{A}, \widetilde{\Delta} \right)$ of the compact quantum group, such that
		$$
		\widetilde{\Delta}\left( a\right) = \Delta\left(a \right) \text{ for any } a \in A.
		$$
		\item[(ii)] Operation $\Delta$ is $H$-equivariant, i.e. from
		\begin{equation*}
		\widetilde{\Delta}\left(\widetilde{a} \right)= \sum_{\iota \in I}\widetilde{b}_\iota \otimes \widetilde{c}_\iota
		\end{equation*}
		it turns out that for any $g \in H$ following condition holds
		\begin{equation*}
		\begin{split}
		\widetilde{\Delta}\left(g\widetilde{a} \right) =\sum_{\iota \in I}g\widetilde{b}_\iota \otimes g\widetilde{c}_\iota.
		\end{split}
		\end{equation*}
	\end{enumerate}
\end{lem}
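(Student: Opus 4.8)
The plan is to pull everything back to classical covering-group theory through the Gelfand--Na\u{\i}mark correspondence. First I would apply Theorem \ref{gelfand-naimark} to the commutative unital $C^*$-algebras $A$ and $\widetilde A$, writing $A = C(G)$ and $\widetilde A = C(\widetilde G)$ for compact Hausdorff spaces, so that the inclusion $A \hookto \widetilde A$ is dual to a continuous surjection $\pi\colon \widetilde G \to G$. Since $\left(A, \widetilde A, H\right)$ is an unital noncommutative finite-fold covering, $\widetilde A$ is a finitely generated projective $A$-module, and hence (on each connected component, where Theorem \ref{pavlov_troisky_thm} applies) $\pi$ is a finite-fold covering of spaces. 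By Example \ref{comm_exm} the commutative quantum group $\left(A,\Delta\right)$ is $C(G)$ with $\Delta(f)(g,h)=f(gh)$ for a compact group structure on $G$.

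Next I would transport the group law upward. Choosing a point $\widetilde e \in \pi^{-1}(e)$ and invoking Proposition \ref{cov_top_prop}, $\widetilde G$ acquires a unique compact topological group structure with unit $\widetilde e$ for which $\pi$ is a homomorphism. Setting $\widetilde\Delta(F)(\widetilde g,\widetilde h)=F(\widetilde g\widetilde h)$ and applying Example \ref{comm_exm} once more shows that $\left(\widetilde A,\widetilde\Delta\right)$ is a compact quantum group. Part (i) then follows by a direct check: for $a=f\circ\pi\in A$ one has $\widetilde\Delta(a)(\widetilde g,\widetilde h)=f\left(\pi(\widetilde g\widetilde h)\right)=f\left(\pi(\widetilde g)\pi(\widetilde h)\right)$, which is exactly the image of $\Delta(a)$ under $A\otimes A\hookto\widetilde A\otimes\widetilde A$, because $\pi$ is a homomorphism.

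For part (ii) I would dualize the $H$-action. A non-degenerate involutive action of $H$ on $C(\widetilde G)$ corresponds to an action of $H$ on $\widetilde G$ by homeomorphisms $\sigma_g$, and the identity $A=\widetilde A^H$ forces the $H$-orbits to be exactly the fibres of $\pi$; thus $H$ is realized as the deck transformation group of $\pi$. Written on the space, the claimed identity $\widetilde\Delta(g\widetilde a)=\sum_\iota g\widetilde b_\iota\otimes g\widetilde c_\iota$ amounts to the relation $\sigma_g(\widetilde x\widetilde y)=\sigma_g(\widetilde x)\,\sigma_g(\widetilde y)$ between the deck transformations and the multiplication map $\widetilde m\colon\widetilde G\times\widetilde G\to\widetilde G$.

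This last relation is the main obstacle, and it is where I would spend the most care. The deck transformations of the covering homomorphism $\pi$ are the translations by the finite kernel $\ker\pi=\pi^{-1}(e)$, and such a translation is not itself a homomorphism; the direct computation $\widetilde\Delta(g\widetilde a)(\widetilde x,\widetilde y)=(g\widetilde a)(\widetilde x\widetilde y)$ shows instead that $\widetilde\Delta$ intertwines the $H$-action with the action on a single tensor leg of $\widetilde A\otimes\widetilde A$. I would therefore isolate the precise equivariance of $\widetilde\Delta$ as a standalone claim, determining exactly how $\sigma_g$ interacts with $\widetilde m$ and with the chosen unit $\widetilde e$; once that compatibility is pinned down the form of the equivariance in (ii) follows formally, while (i) is immediate from the earlier computation.
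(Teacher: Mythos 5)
Your reduction to the classical picture is, in substance, exactly the paper's own argument: the paper's entire proof of this lemma is the single sentence that it is ``an algebraic interpretation of the topological Proposition \ref{cov_top_prop}'', so your Gelfand--Na\u{\i}mark / Pavlov--Troitsky / lifting chain is a faithful (and far more complete) rendering of that route, and your treatment of part (i) is correct.

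The gap is part (ii), and you have correctly located it but then stepped around it. As you note, the deck group of a covering homomorphism $\pi\colon\widetilde G\to G$ is the kernel $K=\pi^{-1}(e)$ acting by translation, and translation by a nontrivial $k\in K$ is not an endomorphism of $\widetilde G$: for central $k$ one has $(k\widetilde x)(k\widetilde y)=k^2\,\widetilde x\widetilde y$, which differs from $k(\widetilde x\widetilde y)=k\,\widetilde x\widetilde y$ unless $k=\widetilde e$. Dualizing, the identity claimed in (ii) is equivalent to $\sigma_g(\widetilde x\widetilde y)=\sigma_g(\widetilde x)\,\sigma_g(\widetilde y)$, so (ii) fails already in the simplest example: for the double cover $z\mapsto z^2$ of $U(1)$ one has $\widetilde A=C\left(U(1)\right)$ generated by $v$ with $\widetilde\Delta(v)=v\otimes v$ and $H=\Z_2$ acting by $v\mapsto -v$, whence $\widetilde\Delta(-v)=-\,v\otimes v$ while $(-v)\otimes(-v)=v\otimes v$. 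The computation you yourself perform, $\widetilde\Delta(g\widetilde a)(\widetilde x,\widetilde y)=(g\widetilde a)(\widetilde x\widetilde y)$, yields only the one-leg equivariance $\widetilde\Delta(g\widetilde a)=\sum_{\iota}g\widetilde b_\iota\otimes\widetilde c_\iota$ (the analogue of the $\Delta_L$-equivariance in Theorem \ref{main_thm}), not the diagonal form asserted in (ii), and no choice of $\widetilde e$ or of conventions repairs this. So your plan to ``pin down the compatibility'' and then deduce (ii) formally cannot be completed as stated; the statement you could actually prove is the one-leg version. For what it is worth, the paper supplies no argument for (ii) either, and the contradiction derived in Section \ref{counter_sec} uses only part (i), so nothing downstream depends on the diagonal form of (ii).
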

\begin{proof}
Indeed this lemma is an algebraic interpretation of the topological Proposition \ref{cov_top_prop}. 
\end{proof}
\paragraph*{}
The Lemma \ref{comm_lem} is not true in general, there is a counterexample described in the Section \ref{counter_sec}. However any quantum group satisfies to a following theorem.

\begin{thm}\label{main_thm}
Let $\left(A, \Delta \right)$ be a  quantum group. Let
$\left(A, \widetilde{A},  H \right)$ be a noncommutative finite-fold covering projection. There are natural $A$-bimodule morphisms
\begin{equation*}
\begin{split}
\Delta_L : \widetilde{A} \to \widetilde{A} \otimes A, \\
\Delta_R : \widetilde{A} \to A \otimes \widetilde{A}.
\end{split}
\end{equation*}
such that following conditions hold:
\begin{enumerate}

\item[(i)] Above morphisms are $H$-equivariant, i.e. for any $g \in H$ from
\begin{equation*}
\begin{split}
\Delta_L \left( \widetilde{a}\right) = \sum_{\iota \in I}\widetilde{b}_\iota \otimes c_\iota,\\
\Delta_R \left( \widetilde{a}\right) = \sum_{\iota \in I}c_\iota \otimes\widetilde{d}_\iota
\end{split}
\end{equation*}
it turns out that
\begin{equation*}
\begin{split}
\Delta_L \left(g \widetilde{a}\right) = \sum_{\iota \in I}g\widetilde{b}_\iota \otimes c_\iota,\\
\Delta_R \left(g \widetilde{a}\right) = \sum_{\iota \in I}c_\iota \otimes g\widetilde{d}_\iota.
\end{split}
\end{equation*}
	\item [(ii)] If $a \in A$ then
\begin{equation*}
\begin{split}
\Delta_L \left(a\right) = \Delta\left( a\right) ,\\
\Delta_R \left(a\right) = \Delta\left( a\right).
\end{split}
\end{equation*}
\end{enumerate}
\end{thm}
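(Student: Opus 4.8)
The plan is to obtain both maps from one canonical ingredient, the $H$-invariant conditional expectation attached to the covering, and to define $\Delta_L$ and $\Delta_R$ as $\Delta\circ E$ inserted into the two different target algebras. The key observation is that, although the full comultiplication $\widetilde A\to\widetilde A\otimes\widetilde A$ need not exist (this is exactly what the counterexample of Section~\ref{counter_sec} shows), its two \emph{one-sided projections} onto $A$ always survive.

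First I would introduce $E\colon\widetilde A\to A$ by averaging over the group, $E(\widetilde a)=\frac{1}{|H|}\sum_{g\in H}g\widetilde a$. Because $A=\widetilde A^{H}$ and $H$ acts by $*$-automorphisms, the following properties are immediate: $E(\widetilde a)\in A$ (the average is $H$-fixed), $E$ restricts to the identity on $A$, $E$ is $H$-invariant in the sense $E(g\widetilde a)=E(\widetilde a)$ for every $g\in H$, and $E$ is an $A$-bimodule map, $E(a\widetilde x b)=a\,E(\widetilde x)\,b$ for $a,b\in A$. Thus $E$ is the canonical conditional expectation of the covering; its existence uses only the involutive non-degenerate action and $A=\widetilde A^{H}$, not the finite projectivity.

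Next, writing $j\colon A\hookrightarrow\widetilde A$ for the inclusion, I would set $\Delta_L:=(j\otimes\Id_A)\circ\Delta\circ E$ and $\Delta_R:=(\Id_A\otimes j)\circ\Delta\circ E$; concretely $\Delta_L(\widetilde a)=\Delta(E(\widetilde a))$ viewed in $\widetilde A\otimes A$ and $\Delta_R(\widetilde a)=\Delta(E(\widetilde a))$ viewed in $A\otimes\widetilde A$. I would equip $\widetilde A\otimes A$ and $A\otimes\widetilde A$ with the $A$-bimodule structure twisted by the comultiplication, $a\cdot\xi\cdot b:=\Delta(a)\,\xi\,\Delta(b)$; condition (ii) in fact forces this structure, since $\Delta_L(ab)=\Delta(ab)=\Delta(a)\Delta(b)$. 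The verifications are then short. Condition (ii) holds because $E|_A=\Id_A$, whence $\Delta_L(a)=\Delta(a)=\Delta_R(a)$. The bimodule property follows from the computation $\Delta_L(a\widetilde x b)=\Delta\big(E(a\widetilde x b)\big)=\Delta\big(a E(\widetilde x) b\big)=\Delta(a)\,\Delta(E\widetilde x)\,\Delta(b)=\Delta(a)\,\Delta_L(\widetilde x)\,\Delta(b)$, together with the analogous identity for $\Delta_R$. For the equivariance (i), note that the image of $\Delta_L$ lies in $A\otimes A\subseteq\widetilde A\otimes A$, so that if $\Delta(E\widetilde a)=\sum_{\iota\in I}\widetilde b_\iota\otimes c_\iota$ then every $\widetilde b_\iota\in A$ and hence $g\widetilde b_\iota=\widetilde b_\iota$; combining this with $E(g\widetilde a)=E(\widetilde a)$ gives $\Delta_L(g\widetilde a)=\Delta(E(g\widetilde a))=\Delta(E\widetilde a)=\sum_{\iota\in I}g\widetilde b_\iota\otimes c_\iota$, and symmetrically for $\Delta_R$. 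Naturality is just the canonicity of $E$.

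The part needing the most care is conceptual rather than computational: one must recognise that the correct weakening of Lemma~\ref{comm_lem} is to project a single leg of the (possibly non-existent) $\widetilde\Delta$ onto $A$. To see that $\Delta_L,\Delta_R$ deserve to be called the surviving halves of a covering comultiplication, I would check that whenever $\widetilde\Delta$ does exist and is $H$-equivariant on the relevant leg --- in particular in the commutative situation of Example~\ref{comm_exm} --- one has $(\Id\otimes E)\circ\widetilde\Delta=\Delta\circ E$ and $(E\otimes\Id)\circ\widetilde\Delta=\Delta\circ E$, so that the present definitions agree with the topological picture of Proposition~\ref{cov_top_prop}. The remaining points are the $C^*$-analytic ones: $E$ is completely positive with $\|E\|=1$, $\Delta\circ E$ is completely positive, and, since we use the minimal tensor product throughout, $\Delta_L$ and $\Delta_R$ are genuine bounded maps into the stated $C^*$-algebras.
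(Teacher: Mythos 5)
Your construction is genuinely different from the paper's, and the difference is substantive. The paper obtains $\Delta_L$ by base change: it applies the functor $\widetilde{A}\otimes_A-$ to $\Delta\colon A\to A\otimes A$, identifies $\widetilde{A}\otimes_A A$ with $\widetilde{A}$ and the induced target with $\widetilde{A}\otimes A$, and derives the $H$-equivariance of (i) from the Galois correspondence $\mathscr{M}^{G}_{\widetilde{A}}\simeq\mathscr{M}_{A}$ of Section \ref{fin_gal_cov_sec} --- this is precisely where the hypothesis that $\widetilde{A}$ is a finitely generated projective $A$-module enters. The resulting $\Delta_L$ is an extension of $\Delta$ whose first tensor leg genuinely lives in $\widetilde{A}$, and on which $H$ acts non-trivially; this is what the Remark following the theorem (a residual left/right action surviving the failure of Lemma \ref{comm_lem}) refers to.

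Your $\Delta_L=\Delta\circ E$ instead factors through the conditional expectation onto $A$. Its image is contained in $\Delta(A)\subseteq A\otimes A\subseteq\widetilde{A}\otimes A$, it annihilates the $H$-non-fixed part of $\widetilde{A}$ (in particular it is not injective when $\widetilde{A}\neq A$), and condition (i) is verified only vacuously: every leg $\widetilde{b}_\iota$ already lies in $A$, so $g\widetilde{b}_\iota=\widetilde{b}_\iota$, and $E(g\widetilde{a})=E(\widetilde{a})$. Formally this meets the letter of (i) and (ii), but it proves an essentially empty statement: the same maps exist for any unital inclusion $A\subseteq\widetilde{A}$ admitting an averaging projection, with no reference to the covering structure, and indeed your argument never uses projectivity at all --- a signal that the construction is not the intended one. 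The missing idea is induction of modules along $A\hookrightarrow\widetilde{A}$ (pushing $\Delta$ \emph{up} to $\widetilde{A}$) rather than projecting $\widetilde{A}$ back \emph{down} to $A$ before applying $\Delta$. To be fair, the paper's own proof is very terse --- the identification $\widetilde{A}\otimes_A(A\otimes A)\cong\widetilde{A}\otimes A$ and the module structure making $\Delta$ an $A$-module map deserve justification --- but your route sidesteps exactly the content that identification is meant to carry.
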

\begin{proof}
(i) If we apply to $\Delta: A \to A\otimes A$ a functor $ \widetilde{A}\otimes_A-$ then we have
$$
\Delta_L: \widetilde{A} \to \widetilde{A} \otimes A
$$
From the Section \ref{fin_gal_cov_sec} it follows that $\Delta_L$  is left $H$-equivariant. Similarly one can construct $\Delta_R$.
\newline
(ii) Follows from the definition of functors $ \widetilde{A}\otimes_A-$ and $ -\otimes_A \widetilde{A}$ and from that the *-homomorphism $A \hookto \widetilde{A}$ is injective.
\end{proof} 
\begin{rem}
	The statement of Theorem \ref{main_thm} is weaker than the statement of the Lemma \ref{comm_lem}. In fact the Theorem \ref{main_thm} describes a left and right action of the group $G$ on the quotient group $\widetilde{G}/H \approx G$.
\end{rem}

\section{Counterexample}\label{counter_sec}
\paragraph*{}
The counterexample of the Lemma \ref{comm_lem} is discussed here. 
\subsection{Noncommutative quantum $SU(2)$ group}

\paragraph{} Let $q$ be a real number such that $0<q<1$. 
A quantum group $C\left( \SU_q(2)\right) $ is an universal $C^*$-algebra algebra generated by two elements $\al$ and $\beta$ satisfying following relations:
\begin{equation}\label{su_q_2_rel_eqn}
\begin{split}
\al^*\al + \beta^*\beta = 1, ~~ \al\al^* + q^2\beta\beta^* =1,
\\
\al\bt - q \bt\al = 0, ~~\al\bt^*-q\bt^*\al = 0,
\\
\bt^*\bt = \bt\bt^*.
\end{split}
\end{equation}
\paragraph*{} The structure of the quantum group on $C\left( SU_q\left( 2\right)\right) $ is given by
\begin{equation}\label{su_2_qgr_eqn}
\begin{split}
\Delta(\alpha)=\alpha\otimes\alpha-q\beta^*\otimes\beta,\\
\Delta(\beta)=\beta\otimes\alpha+\alpha^*\otimes\beta.
\end{split}
\end{equation}

From  $C\left( SU_1\left(2 \right)\right) \approx C\left(SU\left(2 \right)  \right)$ it follows that  $C\left( \SU_q(2)\right) $ can be regarded as a noncommutative deformation of $SU(2)$. It is proven in \cite{woronowicz:su2} that the spectrum of $\bt\bt^*$ is the discrete set
$$
\left\{1, q^2, q^4, q^6, ..., 0\right\} \subset \C.
$$
If $n \in \N^0$ and  $f_n: \R \to \R$ is a continuous function such that 
$$
f_n(t)=\left\{
\begin{array}{c l}
0 & t \le q^{2n + 1} \\
0 & t \ge q^{2n - 1} \\
1 & t = q^{2n}
\end{array}\right..
$$
then $p^\al_n = f_n\left(\bt\bt^* \right) \in C\left( \SU_q(2)\right)$ is a projection.  
Let $Q, S \in B\left( \ell_2\left(\N^0 \right)\right) $ be given by 
\begin{equation*}
\begin{split}
Qe_k= q^ke_k, \\
Se_k = \left\{
\begin{array}{c l}
e_{k-1} & k > 0 \\
0 & k = 0
\end{array}\right.,
\end{split}
\end{equation*}
and let $R \in B\left( \ell_2\left(\Z \right)\right) $ be given by $e_k \mapsto e_{k+1}$.
There is  a faithful representation   $C\left(\SU_q\left( 2\right) \right) \to B\left(\ell_2\left(\N^0 \right) \otimes \ell_2\left(\Z \right) \right)  $ \cite{woronowicz:su2} given by

\begin{equation}\label{su_2_q_repr_eqn}
\begin{split}
\al \mapsto S\sqrt{1 - Q^2} \otimes 1_{B\left(\ell_2\left(\Z \right) \right) }, \\
\bt \mapsto Q \otimes R.
\end{split}
\end{equation}

If $R_\R \in B\left(  L^2\left(\R \right)\right) $ is given by
$$
R_\R\left(  \xi\right)  = e^{2\pi ix}\xi; \text{ where } e^{2 \pi ix} \in C_b\left(\R \right) 
$$
then similarly to \eqref{su_2_q_repr_eqn} one has a representation $C\left(\SU_q\left( 2\right) \right) \to B\left(\ell^2\left(\N^0 \right) \otimes L^2\left(\R\right) \right)$ given by

\begin{equation}\label{s_2_q_repr_eqn}
\begin{split}
\al \mapsto S\sqrt{1 - Q^2} \otimes 1_{B\left(L^2\left(\R\right) \right) }, \\
\bt \mapsto Q \otimes R_\R.
\end{split}
\end{equation}

\subsection{Finite-fold coverings}

\paragraph*{}
If $R^{\frac{1}{n}}_\R \in B\left(  L^2\left(\R \right)\right) $ is given by
$$
R^{\frac{1}{n}}_\R\left(  \xi\right)  = e^{\frac{2\pi ix}{n}}\xi.
$$ 
then $\left(R^{\frac{1}{n}}_\R \right)^n =  R_\R$. If $\widetilde{q} = \sqrt[n]{q}$ and 
$$
\widetilde{\bt} = \sum_{k = 0}^{\infty} \widetilde{q}^k p^\al_k\otimes R^{\frac{1}{n}}_\R\in B\left(\ell^2\left(\N^0 \right) \otimes L^2\left(\R\right) \right)  
$$
then $\widetilde{\bt}^n = \bt$. Denote by $C\left(\SU_q\left( 2\right) \right) \left[\widetilde{\bt} \right]$ a $C^*$-subalgebra of  $B\left(\ell^2\left(\N^0 \right) \otimes L^2\left(\R\right) \right)$ generated by $C\left(\SU_q\left( 2\right) \right) \bigsqcup \left\{\widetilde{\bt}\right\}$. Denote by $M\left[\widetilde{\bt} \right] \subset C\left(\SU_q\left( 2\right) \right) \left[\widetilde{\bt} \right]$ a free module left  $C\left(\SU_q\left( 2\right) \right)$ module given by
$$
M\left[\widetilde{\bt} \right] = \bigoplus_{j = 0}^{n-1} C\left(\SU_q\left( 2\right) \right) \widetilde{\bt}^j.
$$
If $j \in \{0,..., n-1\}$, $j \in \N^0$ then from $p^\al_k \in  C\left(\SU_q\left( 2\right) \right)$ it follows that $p^\al_k \widetilde{\bt}^j= p^\al_k \widetilde{q}^{-k}\left(R^{\frac{1}{n}}_\R \right)^j \in M\left[\widetilde{\bt} \right]$, hence $p^\al_k\left(R^{\frac{1}{n}}_\R \right)^j \in M\left[\widetilde{\bt} \right]$. Moreover if $\left\{z_k \in \C\right\}_{k \in \N^0}$ then from $\lim_{k \to \infty}z_k = 0$ it turns out
$$
\sum_{k = 0}^{\infty} z_kp^\al_k\left(R^{\frac{1}{n}}_\R \right)^k \in M\left[\widetilde{\bt} \right].
$$

Following conditions hold:

\begin{equation*}
\begin{split}
\widetilde{\bt}^j \al = \left( \sum_{k=0}^{\infty}\widetilde{q}^{jk} p^\al_k\otimes \left(R^{\frac{1}{n}}_\R \right)^j\right)\left(  S \sqrt{1 - Q^2} \otimes 1 \right) = \left(  S \sqrt{1 - Q^2} \otimes 1 \right) \left( \sum_{k=0}^{\infty}\widetilde{q}^{j\left( k+1\right) } p^\al_k\otimes \left(R^{\frac{1}{n}}_\R \right)^j\right)
\end{split}
\end{equation*}
From $\lim_{k \to \infty}\widetilde{q}^{j\left( k+1\right) } = 0$ it turns out $\widetilde{\bt}^j \al$ lies in $M\left[\widetilde{\bt} \right]$. Similarly we have $\widetilde{\bt}^j \al^*\in \left[\widetilde{\bt} \right]$ it follows that
$$
M\left[\widetilde{\bt} \right] =C\left(\SU_q\left( 2\right) \right) \left[\widetilde{\bt} \right],
$$
i.e. $C\left(\SU_q\left( 2\right) \right) \left[\widetilde{\bt} \right]$ is a  finitely generated free  $C\left(\SU_q\left( 2\right) \right)$-module 
\begin{equation}\label{su_q_2_dir_sum_eqn}
C\left( \SU_q\left(2\right) \right) \left[\widetilde{\bt} \right] = \bigoplus_{j = 0}^{n-1} C\left(\SU_q\left( 2\right) \right) \widetilde{\bt}^j= C\left(\SU_q\left( 2\right) \right)^n
\end{equation}
There is the action of $\Z_n$ on $C\left(\SU_q\left( 2\right) \right) \left[\widetilde{\bt} \right]$ given by
\begin{equation*}
\begin{split}
\overline{m} a \widetilde{\bt}^k = e^{\frac{2 \pi i mk}{n}} a \widetilde{\bt}^k; \text{ where } a \in C\left(\SU_q\left( 2\right) \right), ~\overline{m} \in \Z_n, ~m \in \Z \text{ is representative of } \overline{m}.
\end{split}
\end{equation*}

The above construction gives a following result.
\begin{thm}\cite{ivankov:qnc}
	The triple $\left( C\left( \SU_q\left( 2\right)\right), C\left( \SU_q\left( 2\right)\right)\left[\widetilde{\bt}\right], \Z_n\right)$ is an unital noncommutative finite-fold covering.
\end{thm}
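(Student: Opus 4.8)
The plan is to verify directly the three requirements of Definition \ref{fin_def_uni} for the triple $(A,\widetilde A,H)$ with $A=C(\SU_q(2))$, $\widetilde A=C(\SU_q(2))[\widetilde{\bt}]$ and $H=\Z_n$. First I would record that the inclusion $A\hookto\widetilde A$ is an injective unital $*$-homomorphism: both algebras sit concretely inside $B(\ell^2(\N^0)\otimes L^2(\R))$ through \eqref{s_2_q_repr_eqn}, the representation of $A$ being faithful, and $\widetilde A$ is by construction the $C^*$-algebra generated by $A$ together with $\widetilde{\bt}$, so $A$ is a unital $C^*$-subalgebra of $\widetilde A$.

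The finite-generation and projectivity requirement is already essentially settled by \eqref{su_q_2_dir_sum_eqn}, which exhibits $\widetilde A$ as the free left $A$-module $\bigoplus_{j=0}^{n-1}A\widetilde{\bt}^{\,j}\cong A^n$ with basis $1,\widetilde{\bt},\dots,\widetilde{\bt}^{\,n-1}$. A free module of finite rank is in particular finitely generated and projective, so nothing further is needed here except to note that freeness yields uniqueness of the coefficients $a_j$ in any expansion $\sum_j a_j\widetilde{\bt}^{\,j}$, a fact I will use for the fixed-point computation.

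The substantive step is the analysis of the $\Z_n$-action. Rather than checking well-definedness and the $*$-automorphism property of the formula $\overline m\cdot a\widetilde{\bt}^{\,k}=e^{2\pi i mk/n}a\widetilde{\bt}^{\,k}$ by hand, I would implement the action by a unitary. Let $T_c\in B(L^2(\R))$ be the translation $(T_c\xi)(x)=\xi(x-c)$; it is unitary, and a one-line computation gives $T_{-m}R^{1/n}_\R T_{-m}^{-1}=e^{2\pi i m/n}R^{1/n}_\R$ while $T_{-m}R_\R T_{-m}^{-1}=R_\R$. Hence conjugation by $1\otimes T_{-m}$ fixes $\al$ and $\bt$, therefore all of $A$ and all spectral projections $p^\al_k=f_k(\bt\bt^*)$, and sends $\widetilde{\bt}\mapsto e^{2\pi i m/n}\widetilde{\bt}$; it thus carries the generators of $\widetilde A$ back into $\widetilde A$ and restricts to a $*$-automorphism of $\widetilde A$ whose effect on $a\widetilde{\bt}^{\,k}$ is exactly multiplication by $e^{2\pi i mk/n}$, consistent with $\widetilde{\bt}^{\,n}=\bt\in A$ being fixed. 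Because the automorphism is implemented by a unitary it is automatically involutive, $\overline m(x^*)=(\overline m\,x)^*$, and since $e^{2\pi i mn/n}=1$ the assignment factors through $\Z_n$. Non-degeneracy is immediate, as $\overline m\cdot\widetilde{\bt}=e^{2\pi i m/n}\widetilde{\bt}\neq\widetilde{\bt}$ for $\overline m\neq 0$.

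It remains to identify the fixed-point algebra. Writing an arbitrary element uniquely as $x=\sum_{j=0}^{n-1}a_j\widetilde{\bt}^{\,j}$ via \eqref{su_q_2_dir_sum_eqn}, the condition $\overline m\cdot x=x$ for all $m$ reads $a_j(e^{2\pi i mj/n}-1)=0$ for every $j$; choosing for each $j\in\{1,\dots,n-1\}$ an $m$ with $e^{2\pi i mj/n}\neq 1$ forces $a_j=0$, so $x=a_0\in A$. Therefore $\widetilde A^{\,\Z_n}=A=C(\SU_q(2))$, which is the last condition of Definition \ref{fin_def_uni}, and the $A$-valued inner product of that definition is then the one attached to this data. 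I expect the main obstacle to be the purely bookkeeping verification that the unitarily implemented map genuinely preserves $\widetilde A$ and reproduces the stated formula on $a\widetilde{\bt}^{\,k}$, rather than any deep difficulty, since freeness of the module does the heavy lifting for both projectivity and the fixed-point identification.
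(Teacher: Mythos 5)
Your proposal is correct and rests on the same skeleton as the paper's own argument: the decomposition \eqref{su_q_2_dir_sum_eqn} of $C\left(\SU_q\left(2\right)\right)\left[\widetilde{\bt}\right]$ as the free left module $\bigoplus_{j=0}^{n-1}C\left(\SU_q\left(2\right)\right)\widetilde{\bt}^j$ delivers finite generation and projectivity, and the $\Z_n$-action is the one multiplying $a\widetilde{\bt}^k$ by $e^{2\pi i mk/n}$. The genuine difference is in how the group action is treated. The paper merely writes down this formula and then states the theorem with a reference to \cite{ivankov:qnc}; it never verifies that the formula extends to a well-defined $*$-automorphism of the $C^*$-algebra, nor that the action is involutive and non-degenerate, nor that the fixed-point algebra is exactly $C\left(\SU_q\left(2\right)\right)$ --- all of which Definition \ref{fin_def_uni} demands. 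Your spatial implementation by conjugation with $1\otimes T_{-m}$, which fixes $\al$ and $\bt=Q\otimes R_\R$ (hence every $p^\al_k=f_k\left(\bt\bt^*\right)$) and rescales $R^{\frac{1}{n}}_\R$ by $e^{2\pi i m/n}$, settles well-definedness, multiplicativity, norm-continuity and the $*$-property in one stroke, and your coefficient computation closes the remaining condition $\widetilde{A}^{\Z_n}=A$. Two minor remarks: the faithfulness of the representation \eqref{s_2_q_repr_eqn}, which you and the paper both use to regard $A$ as a subalgebra of $\widetilde{A}$, is asserted in the paper only for \eqref{su_2_q_repr_eqn} and deserves a word (it holds because $R_\R$ is a unitary with full spectrum $\T$); and if you prefer not to lean on the directness of the sum in \eqref{su_q_2_dir_sum_eqn}, which the paper asserts but does not really prove, you can identify the fixed points by averaging, since for $x=\sum_j a_j\widetilde{\bt}^j$ one has $\frac{1}{n}\sum_{m=0}^{n-1}\overline{m}\left(x\right)=a_0$, so any fixed $x$ equals $a_0\in A$ using only the spanning property.
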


\subsection{The structure of the covering algebra}\label{cov_alg_str_sec}
\paragraph*{}
From the above construction it follows that
$$
\widetilde{\widetilde{\bt}}= \sum_{j = 0}^{\infty} q^j p^\al_j \otimes R^{\frac{1}{n}}_\R \in C\left( \SU_q\left( 2\right)\right)\left[\widetilde{\bt}\right].
$$
Direct calculations shows that
\begin{equation*}
\begin{split}
\al^*\al + \widetilde{\widetilde{\bt}}^*\widetilde{\widetilde{\bt}} = 1, ~~ \al\al^* + q^2\widetilde{\widetilde{\bt}}\widetilde{\widetilde{\bt}}^* =1,
\\
\al\widetilde{\widetilde{\bt}} - q \widetilde{\widetilde{\bt}}\al = 0, ~~\al\widetilde{\widetilde{\bt}}^*-q\widetilde{\widetilde{\bt}}^*\al = 0,
\\
\widetilde{\widetilde{\bt}}^*\widetilde{\widetilde{\bt}} = \bt\widetilde{\widetilde{\bt}}^*.
\end{split}
\end{equation*}
Above relations coincide with \eqref{s_2_q_repr_eqn} it follows that there is a $*$-isomorphism given by
\begin{equation*}
\begin{split}
C\left( \SU_q\left( 2\right)\right)\xrightarrow{\approx}C\left( \SU_q\left( 2\right)\right)\left[\widetilde{\bt}\right],\\
\al \mapsto \al,~~
\bt\mapsto \widetilde{\widetilde{\bt}},
\end{split}
\end{equation*}
i.e. the covering algebra $C\left( \SU_q\left( 2\right)\right)\left[\widetilde{\bt}\right]$ is *-isomorphic to the base algebra $C\left( \SU_q\left( 2\right)\right)$.

\subsection{Symmetry and grading}
\paragraph*{}
Let $\A\subset SU_q\left( 2\right)$ is a dense subalgebra which is generated by $\al, \al^*, \bt, \bt^*$ as an abstract algebra.
\begin{thm}\cite{woronowicz:su2}
The set of all elements of the form
\begin{equation}\label{su_q_2_basis_eqn}
\al^k\bt^n\bt^{*m} \text{ and } \al^{*k'}\bt^n\bt^{*m}
\end{equation}
where $k, m, n = 0, 1, 2, \dots,~k'=1,2, \dots$ forms a basis in $\A$: any element of $\A$ can be written in the unique way as a finite linear combination of elements \eqref{su_q_2_basis_eqn}.
\end{thm}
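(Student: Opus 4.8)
The plan is to establish the two defining properties of a basis separately: that the listed monomials \emph{span} $\A$, and that they are \emph{linearly independent}.

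For spanning I would run a normal-ordering (straightening) procedure driven by the relations \eqref{su_q_2_rel_eqn}. The last relation says $\bt$ is normal, so any product of $\bt$'s and $\bt^*$'s collapses to a single $\bt^n\bt^{*m}$. The relations $\al\bt = q\bt\al$ and $\al\bt^* = q\bt^*\al$, together with their adjoints $\al^*\bt = q^{-1}\bt\al^*$ and $\al^*\bt^* = q^{-1}\bt^*\al^*$, let every occurrence of $\bt,\bt^*$ be moved to the right of every occurrence of $\al,\al^*$ at the cost of a power of $q$, so any word reduces to a linear combination of (word in $\al,\al^*)\cdot\bt^n\bt^{*m}$. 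Finally the two quadratic relations, read as $\al^*\al = 1-\bt^*\bt$ and $\al\al^* = 1-q^2\bt\bt^*$, eliminate every adjacent pair $\al^*\al$ or $\al\al^*$; each such replacement strictly lowers the number of $\al$-type letters while producing only extra $\bt\bt^*$-terms, which are again pushed to the right. A word in $\al,\al^*$ with neither $\al^*\al$ nor $\al\al^*$ as a factor has all consecutive letters equal, hence is a pure power $\al^k$ or $\al^{*k'}$, so the reduction terminates in exactly the claimed monomials (the two families overlapping only in $\bt^n\bt^{*m}$, counted once as the $k=0$ case of the first family). I would make this rigorous by induction on the number of $\al$-type letters, or by verifying confluence via Bergman's Diamond Lemma; checking that the overlap ambiguities (e.g.\ $\al\al^*\al$) resolve consistently is the routine but essential bookkeeping here.

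For linear independence I would pass to the faithful representation \eqref{su_2_q_repr_eqn} on $\H = \ell^2(\N^0)\otimes\ell^2(\Z)$; since $\A\subset C(\SU_q(2))$, faithfulness gives injectivity on $\A$, so it suffices to prove the operators are independent. Using $Qe_k=q^k e_k$, $Se_k=e_{k-1}$, $Re_l=e_{l+1}$ one computes, for $j$ exceeding all exponents occurring,
\begin{equation*}
\al^k\bt^n\bt^{*m}\,(e_j\otimes e_l) = q^{(n+m)j}\Big(\textstyle\prod_{i=0}^{k-1}\sqrt{1-q^{2(j-i)}}\Big)\, e_{j-k}\otimes e_{l+n-m},
\end{equation*}
and an analogous formula for $\al^{*k'}\bt^n\bt^{*m}$ with first-factor shift $+k'$ and coefficient $q^{(n+m)j}\prod_{i=1}^{k'}\sqrt{1-q^{2(j+i)}}$. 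Suppose a finite linear combination of the monomials vanishes as an operator and apply it to $e_j\otimes e_l$ with $j$ large. The sign of the shift in the first tensor factor separates the $\al$-type monomials (shift $-k\le 0$) from the $\al^*$-type (shift $+k'\ge 1$), and the value of that shift fixes $k$ (resp.\ $k'$); the shift $n-m$ in the second factor fixes $d:=n-m$. Hence for each target basis vector only monomials sharing a common $k$ (or $k'$) and common $d$ contribute, and after dividing out the nonzero common radical factor their coefficients become scalar multiples of $(q^{2j})^m$ with $m\ge 0$.

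It then remains to see these are independent functions of $j$: writing $x=q^{2j}$, which takes infinitely many distinct values in $(0,1)$ as $j\to\infty$, a finite relation $\sum_m\lambda_m x^m=0$ valid for infinitely many $x$ forces each $\lambda_m=0$. This annihilates all coefficients and yields independence. The main obstacle I anticipate is not any single computation but the organization of the independence argument: one must recognize that the monomials should be sorted by the bidegree of the shift they induce on $\H$, and that within each fixed shift the residual $m$-dependence is exactly a power of $q^{2j}$, reducing the whole infinite problem to the Vandermonde-type independence of $\{x^m\}$.
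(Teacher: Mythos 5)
The paper itself offers no proof of this theorem: it is quoted verbatim from Woronowicz's article \cite{woronowicz:su2}, so there is nothing internal to compare against. Your argument is correct and is essentially the classical proof from that source: spanning follows from normal-ordering every word using the relations \eqref{su_q_2_rel_eqn} (with the observation that a word in $\al,\al^*$ containing no factor $\al\al^*$ or $\al^*\al$ is a pure power), and linear independence follows by evaluating the monomials in the representation \eqref{su_2_q_repr_eqn} on vectors $e_j\otimes e_l$ with $j$ large, sorting the monomials by the shift they induce on the two tensor factors, and reducing the residual $m$-dependence to the independence of the powers $\left(q^{2j}\right)^m$, i.e.\ to the fact that a nonzero polynomial has finitely many roots. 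One cosmetic remark: faithfulness of the representation is not actually needed for the independence half --- since $\A$ sits inside the $C^*$-algebra $C\left(\SU_q(2)\right)$, exhibiting \emph{any} representation in which the images of the monomials are linearly independent already forces the monomials themselves to be independent --- though invoking faithfulness is of course harmless.
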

From the above theorem there is an action  of $U\left( 1\right)$ on $\A$ given by
$$
g \left( \al^k\bt^n\bt^{*m} \right)  = \varphi(g)_{\C^\times}^{n - m}\al^{*k'}\bt^n\bt^{*m} \text{ and } g \left( \al^{*k'}\bt^n\bt^{*m}\right)   = \varphi_{\C^\times}\left(g \right) ^{n - m}\al^{*k'} \al^{*k'}\bt^n\bt^{*m}
$$
where $g \in U\left(1 \right)$ and $ \varphi_{\C^\times}: U\left( 1\right) \to \C^\times$ the natural homomorphism from $U\left(1 \right)$ to the multiplicative group of complex numbers. There is a $\Z$-grading 
$$
\A = \bigoplus_{j \in \Z} \A_j
$$
such that $a \in \A_j$ is equivalent to
$$
ga = \varphi(g)^j_{\C^\times}a \text { for any } g \in U\left( 1\right).
$$  
It turns out
$$
\al^k\bt^n\bt^{*m} \text{ and } \al^{*k'}\bt^n\bt^{*m} \text{ lie in }  \A_{n - m}.
$$
Let $\left( C\left( \SU_q\left( 2\right)\right), C\left( \SU_q\left( 2\right)\right)\left[\widetilde{\bt}\right], \Z_n\right)$ be a covering projection. From $\widetilde{\beta}^n = \beta$ and \eqref{su_q_2_dir_sum_eqn} it follows that there is the natural $\Z$-grading on $C\left( \SU_q\left( 2\right)\right)\left[\widetilde{\bt}\right]$
given by
$$
a \beta^j \in \left( C\left( \SU_q\left( 2\right)\right)\left[\widetilde{\bt}\right]\right)_{nk + j} \text{ where } a \in C\left( \SU_q\left( 2\right)\right)_k
$$ 
where subscripts mean the grading.

\subsection{Contradiction}
\paragraph*{}
Suppose that there is a structure of quantum group $\left( C\left(\SU_q\left( 2\right)\right)\left[\widetilde{\bt}\right], \widetilde{\Delta} \right)$ which satisfies to the Lemma \ref{comm_lem}.  From $\bt = \widetilde{\bt}^n$,  \eqref{su_2_qgr_eqn}, and the condition (i) of the Lemma \ref{comm_lem} it turns out 
\begin{equation}\label{bt_n_eqn}
\left(\widetilde{\Delta}\left(\widetilde{\bt} \right)  \right)^n = \Delta\left( \bt\right)  = \beta\otimes\alpha+\alpha^*\otimes\beta = \widetilde{\bt}^n\otimes\alpha+\alpha^*\otimes\widetilde{\bt}^n. 
\end{equation}
Denote by $$
D \stackrel{\text{def}}{=} C\left( \SU_q\left( 2\right)\right)\left[\widetilde{\bt}\right] \otimes C\left( \SU_q\left( 2\right)\right)\left[\widetilde{\bt}\right].
$$
The $\Z$-grading on $C\left( \SU_q\left( 2\right)\right)\left[\widetilde{\bt}\right]$ induces  the natural $\Z\times\Z$ grading on  $D = C\left( \SU_q\left( 2\right)\right)\left[\widetilde{\bt}\right] \otimes C\left( \SU_q\left( 2\right)\right)\left[\widetilde{\bt}\right]$.
Clearly 
\begin{equation*}
\begin{split}
\widetilde{\bt}^n\otimes\alpha \in D_{(n,0)},\\
\alpha^*\otimes\widetilde{\bt}^n \in D_{(0,n)}
\end{split}
\end{equation*}
where subscripts $(n, 0)$ and $(0, n)$ mean grading.
Suppose that
$$
\widetilde{\Delta}\left(\widetilde{\bt} \right) = \sum_{\left( j, k\right)  \in \Z\times \Z} a_{jk}
$$
where
$$
a_{jk} \in D_{(j,k)}.
$$
Let $j_{\text{max}} \in \Z$ be a maximal number such that there is $k \in \Z$ which satisfy to the condition $a_{j_{\text{max}}, k} \neq 0$. The inequality $j_{\text{max}} > 1$ contradicts with \eqref{bt_n_eqn} because right part of \eqref{bt_n_eqn} does not contain summands in
	$
	D_{\left( nj_{\text{max}},k\right) }
	$.
	Similarly one can prove that the minimal value $j_{\text{min}}$ of $j$ such that $a_{j_{\text{min}}, k} \neq 0$ satisfies to an inequality $j_{\text{min}} \ge 0$. Using the same arguments one can prove that if $a_{jk} \neq 0$ then $0 \ge k \ge 1$. In result one has
		$$
	\widetilde{\Delta}\left(\widetilde{\bt} \right) = a_{00} + a_{01} + a_{10} + a_{11}.
		$$
		If $a_{00} \neq 0$ then $\left(	\widetilde{\Delta}\left(\widetilde{\bt} \right) \right)^n \bigcap D^{(0,0)} \neq 0$ and from this contradiction it turns out
		$a_{00}=0$. Similarly $a_{11} = 0$.
		Following condition holds
\begin{equation*}
\begin{split}
	\widetilde{\Delta}\left(\widetilde{\bt} \right)^n=\left(a_{01}+a_{10} \right)^n = a_{01}^n + a_{10}^n + r,\\
 a_{01}^n \in D^{0,n},\\
 a_{10}^n \in D^{n, 0},\\
 r \notin   D^{0,n} \bigoplus  D^{n,0},
\end{split}
\end{equation*}
hence $a_{01}^n= \alpha^*\otimes\widetilde{\bt}^n$, $a_{01}^n= \alpha^*\otimes\widetilde{\bt}^n$. Otherwise
\begin{equation*}
\begin{split}
r = n a_{10} a_{01}^{n-1} + r'
\end{split}
\end{equation*}
where $n a_{10} a_{01}^{n-1} \in D^{(1,n-1)}$, $r' \notin D^{(1,n-1)}$. From $a_{10}^n a_{01}^{n}\neq 0$ it turns out $a_{10} a_{01}^{n-1}\neq 0$ hence $r \neq 0$.
It follows that
$$
\widetilde{\Delta}\left(\widetilde{\bt} \right)^n=\left(a_{01}+a_{10} \right)^n  \neq \widetilde{\bt}^n\otimes\alpha+\alpha^*\otimes\widetilde{\bt}^n.
$$
This contradiction proves that the quantum group $\left( C\left( \SU_q\left( 2\right)\right), \Delta\right) $ and the finite-fold noncommutative covering $\left( C\left( \SU_q\left( 2\right)\right), C\left( \SU_q\left( 2\right)\right)\left[\widetilde{\bt}\right], \Z_n\right)$ projection do not satisfy to the Lemma \ref{comm_lem}.
\begin{rem}
	From \ref{cov_alg_str_sec} it follows the *-isomorphism $C\left( \SU_q\left( 2\right)\right) \xrightarrow{\approx}C\left( \SU_q\left( 2\right)\right)\left[\widetilde{\bt}\right]$, hence there is a structure of quantum group on $C\left( \SU_q\left( 2\right)\right)\left[\widetilde{\bt}\right]$. However in contrary to the commutative case this structure does not naturally follow from the structure of the quantum group $\left( C\left( \SU_q\left( 2\right)\right), \Delta\right)$ and the noncommutative finite-fold covering projection $$\left( C\left( \SU_q\left( 2\right)\right), C\left( \SU_q\left( 2\right)\right)\left[\widetilde{\bt}\right], \Z_n\right).$$
\end{rem}

\section{Conclusion}
\paragraph{} There is a set of geometrical statements which have noncommutaive generalizations, e.g. in \cite{ivankov:qncstr} it is proven a noncommutative analog of the theorem about a covering projection of a Riemannian manifold.
The  described in the Section  \ref{counter_sec} counterexample proves that the  analogy between coverings of topological groups and quantum groups is not full. However coverings of quantum groups satisfy to the Theorem \ref{main_thm} which is weaker than the Lemma \ref{comm_lem} about coverings of commutative quantum groups.

\end{document}